\documentclass[reqno]{amsart}


\usepackage{amsmath}
\usepackage{amsfonts}
\usepackage{amssymb,enumerate}
\usepackage{amsthm}
\usepackage[all]{xy}
\usepackage{rotating}
\usepackage{hyperref}
\usepackage{color}


\theoremstyle{plain}
\newtheorem{lem}{Lemma}[section]

\newtheorem{prop}[lem]{Proposition}
\newtheorem{thm}[lem]{Theorem}

\newtheorem*{mthm*}{Main Theorem}

\theoremstyle{definition}

\newtheorem{ex}[lem]{Example}
\newtheorem{question}[lem]{Question}

\newtheorem{para}[lem]{}

\newtheorem*{convention*}{Convention}






\newcommand{\im}{\operatorname{Im}}


\newcommand{\Tor}{\operatorname{Tor}}

\newcommand{\HH}{\operatorname{H}}

\newcommand{\Ker}{\operatorname{Ker}}



\newcommand{\shift}{\mathsf{\Sigma}}



\newcommand{\xra}{\xrightarrow}
\newcommand{\xla}{\xleftarrow}


\renewcommand{\geq}{\geqslant}
\renewcommand{\leq}{\leqslant}
\renewcommand{\ker}{\Ker}

\def\Tor{\operatorname{Tor}}

\def\m{\mathfrak{m}}
\def\fm{\mathfrak{m}}

\newcommand{\Cone}{\operatorname{Cone}}

\numberwithin{equation}{lem}

\begin{document}

\bibliographystyle{amsplain}

\title[Tor algebra of local rings with decomposable maximal ideal]{Tor algebra of local rings with\\ decomposable maximal ideal}

\author{Saeed Nasseh}
\address{Department of Mathematical Sciences\\
Georgia Southern University\\
Statesboro, GA 30460, U.S.A.}
\email{snasseh@georgiasouthern.edu}

\author{Maiko Ono}
\address{Department of Mathematics, Okayama University, 3-1-1 Tsushima-naka, Kita-ku, Okayama 700-8530, Japan}
\email{onomaiko.math@okayama-u.ac.jp}

\author{Yuji Yoshino}
\address{Graduate School of Environmental, Life, Natural Science and Technology, Okayama University, Okayama 700-8530, Japan}
\email{yoshino@math.okayama-u.ac.jp}

\thanks{M. Ono was partly supported by the Wesco Scientific Promotion Foundation and Y. Yoshino was supported by JSPS Kakenhi Grant 19K03448 and 24K0669.}


\keywords{Avramov's machine, decomposable maximal ideal, DG algebra, fiber product, Koszul complex, Tor algebra, trivial extension}
\subjclass[2020]{13D07, 16E30, 16E45}

\begin{abstract}
Let $(R,\m_R)$ be a commutative noetherian local ring. Assuming that $\m_R=I\oplus J$ is a direct sum decomposition, where $I$ and $J$ are non-zero ideals of $R$, we describe the structure of the Tor algebra of $R$ in terms of the Tor algebras of the rings $R/I$ and $R/J$.
\end{abstract}

\maketitle


\section{Introduction}\label{sec20231210a}

\begin{para}\label{para20250420b}
Throughout the paper, $(R,\fm_R,k)$ is a commutative noetherian local ring, $\mathbf{x}$ is a minimal generating sequence of length $e$ for $\fm_R$, and $\widehat{R}\cong Q/\frak a$ is a minimal Cohen presentation, i.e., $(Q,\frak m_Q, k)$ is a complete regular local ring of Krull dimension $e$ and $\frak a$ is an ideal of $Q$ such that $\frak a\subseteq \frak m_Q^2$. We denote the \emph{Tor algebra} $\Tor^Q(k, \widehat{R})$ by $A_R$, which has a finite-dimensional DG $k$-algebra structure with zero differential. Here, ``DG algebra'' stands for strongly commutative differential graded algebra that is non-negatively graded; for unspecified definitions and notations from DG homological algebra we refer the reader to~\cite{NOY-jadid}, \cite{NOY1}, or~\cite{nasseh:survey}. Also, $K^R(-)$ denotes the Koszul complex on a sequence of elements in $R$ and for an $R$-module $M$, we denote the tensor product $K^R(-)\otimes_RM$ by $K^R(-, M)$.
\end{para}

Several classes of local rings can be characterized by analyzing the structure of the Tor algebra. For instance, by a result of Serre~\cite{serre}, $R$ is regular if and only if $A_R=(A_R)_0$; Assmus~\cite{assmus} and Tate~\cite{Tate} showed that $R$ is a complete intersection if and only if $A_R$ is the exterior algebra on $(A_R)_1$; and Avramov and Golod~\cite{AG} proved that $R$ is Gorenstein if and only if $A_R$ is a Poicaré duality algebra. Other works on the structure of Tor algebra include, but certainly not limited to,~\cite{AKM, buchs, LOJ, kustin:gacfct, MR1292771, kustin:asmrgrecf, MR713381, KellerV, KellerV1, wey}. Deep connections between $R$ and $A_R$ can be explored using a method invented by Avramov, as we discuss next. Kustin~\cite{kustin} calls this method the \emph{Avramov's machine}.

\begin{para}\label{para20240713a}
Let $\mathbf{y}$ be a minimal generating sequence for $\fm_Q$. Note that $K^Q(\mathbf{y})\xra{\simeq} k$ is a minimal free resolution
of $k$ over $Q$ and thus, we obtain a diagram
\begin{equation}\label{eq20240713a}
K^{R}(\mathbf{x})\xra{\simeq}K^{\widehat{R}}(\mathbf{x}\widehat{R})\xla{\cong}K^Q(\mathbf{y})\otimes_Q\widehat{R}\xra\simeq k\otimes_Q \widehat{R}
\end{equation}
of (quasi-)isomorphisms. Therefore, there is an isomorphism
\begin{equation}\label{eq20250502a}
A_R\cong \HH(K^{R}(\mathbf{x}))
\end{equation}
of DG $k$-algebras under which we can identify $A_R$ by $\HH(K^{R}(\mathbf{x}))$.

If we assume that the minimal free resolution $F_{\widehat{R}}^Q$ of $\widehat{R}$ over $Q$ admits a DG algebra structure, then $k\otimes_QF_{\widehat{R}}^Q$ is a DG $k$-algebra with zero differential and in this case we have $A_R\cong k\otimes_QF_{\widehat{R}}^Q$; see~\cite[2.7 and 2.8]{nasseh:survey} for more details.
\end{para}

Using Avramov's machine, especially in case where $F_{\widehat{R}}^Q$ admits a DG algebra structure, many interesting questions about $R$ can be dealt with by translating them into questions about $A_R$. The point is that, although $A_R$ is graded commutative (as oppose to just commutative), it has a more comprehensible structure than $R$. For instance, the Poincaré series of $R$ was described in terms of the Poincaré series of $A_R$ by Avramov; see~\cite{avramov:cslrec3} or~\cite{MR485906}. Also, a method for constructing surjective Golod maps to $R$ using Golod maps to $A_R$ was introduced by Avramov and Backelin~\cite{Av:vpd'}. Moreover, using the Avramov's machine, Nasseh and Sather-Wagstaff~\cite{nasseh:lrfsdc} answered a conjecture posed by Vasconcelos about semidualizing $R$-modules. Further applications of the Tor algebra on the Ext- and Tor-friendliness can be found in~\cite{AINSW, avramov:phcnr}; for a collection of numerous other applications see~\cite{nasseh:survey}.\vspace{4pt}

Assuming that $\m_R=I\oplus J$ is a non-trivial direct sum decomposition, where $I$ and $J$ are ideals of $R$, the purpose of this paper is to describe the structure of the Tor algebra $A_R$ in terms of the Tor algebras $A_{R/I}$ and $A_{R/J}$. We will state the main result of this paper after clarifying some more notations in the next discussion.

\begin{para}\label{para20250420c}
Let $A$ be a DG algebra. By $A^+$ we denote the positive graded part of $A$, i.e., $A^+=\oplus_{n\geq 1}A_n$.
For a DG $A$-module $L$, the trivial extension $A\ltimes L$ is the DG algebra with the underlying complex structure $A\oplus L$ and the product given by the formula $(a,l)(a',l')=(aa',al'+(-1)^{|l||a'|}a'l)$, for homogeneous elements $a,a'\in A$ and $l,l'\in L$. Also, for an integer $i$, the $i$-th shift of $L$ is denoted $\shift^i L$. Note that for all integers $j$ we have $\left(\shift^i L\right)_j = L_{j-i}$ and $\partial_j^{\shift^i L}=(-1)^i\partial_{j-i}^L$, where $\partial$ denotes the differential. We simply write $\shift L$ for $\shift^1 L$. Finally, for a positive integer $n$, we denote by $L^n$ the direct sum $\oplus_{i=1}^n L$.
\end{para}

Here is the statement of the main result of this paper. 

\begin{mthm*}\label{2products}
Assume that $\m_R=I\oplus J$, where $I$ and $J$ are non-zero ideals of $R$ minimally generated by sequences $\mathbf{x}_1$ of length $t$ and $\mathbf{x}_2$ of length $s$, respectively. Then, there is an isomorphism
\begin{equation}\label{eqa}
A _R ^+ \cong \left(\left(\bigwedge k ^{t}\otimes_k A ^+_{R/I}\right)\times \left(A ^+_{R/J} \otimes _k \bigwedge k^{s}\right)\right) \ltimes W
\end{equation}
of $k$-algebras (without unity; see~\ref{para20240724b}) with
\begin{equation}\label{eq20250510a}
W=\shift^{-1}\left(\frac{\bigwedge k^{t}\otimes_k\bigwedge k^{s}}{k\otimes_k\bigwedge k^{s}+\bigwedge k^{t} \otimes_k k}\right)
\end{equation}
which determines the algebra structure of $A_R$ via the equality  $A_R = k \oplus A_R ^+$.
\end{mthm*}


\section{Proof of Main Theorem}\label{section20240729a}

We give the proof of Main Theorem after some preparation. For the time being, to avoid confusion, only the terminology from~\ref{para20250420b}-\ref{para20250420c} (and not from Main Theorem) will be in effect until further notice. We start with the following discussion.

\begin{para}\label{para20240724a}
The exact sequence $0 \to K^R(\mathbf{x}, \m_R)  \to K^R(\mathbf{x})  \xra{\nu}  K^R(\mathbf{x}, k) \to 0$ of DG $K^R(\mathbf{x})$-modules induces a triangle 
\begin{equation}\label{tri}
\HH(K^R(\mathbf{x}, \m_R))  \to A_R \xra{\pi_R} \bigwedge k^{e}  \xra{\psi} \shift \HH(K^R(\mathbf{x}, \m_R))
\end{equation}
of graded $k$-vector spaces. Note that the induced map $\pi_R:=\HH(\nu)$ is a graded $k$-algebra homomorphism. In this situation, we have an equality
\begin{equation}\label{eq20250421a}
Z(K^R(\mathbf{x}, \m_R)) ^+  = Z(K^R(\mathbf{x}))^+
\end{equation}
of positive graded parts of the cycle sets. To justify this, let $\alpha = \sum_{\Lambda} c_{\Lambda} \omega_{\Lambda}$ be an element of degree $0<r\leq e$ in $Z(K^R(\mathbf{x}))^+$ with $c_{\Lambda} \in R$ (not necessarily non-zero) and $\omega_{\Lambda} = \omega_{\lambda_1} \wedge \cdots \wedge \omega_{\lambda_r}$, where $\Lambda = \{\lambda_1, \ldots , \lambda_r\}$ runs through all subsets of $\{1, \ldots , e\}$ of cardinality $r$. For each subset $\Delta\subseteq \{1, \ldots , e\}$ of cardinality $r-1$, looking at the coefficient of $\omega_{\Delta}$ in $d_r^{K^R(\mathbf{x})}(\alpha)= 0$ we get the equality 
$$
\sum _{\lambda_t \not\in \Delta} \pm \left(c_{\Delta\cup \{\lambda_t\}} x_{\lambda_t}\right) =0.
$$ 
Thus, since $\mathbf{x}$ is a minimal generating sequence for $\m_R$, none of the elements $c_{\Lambda}$ is a unit in $R$. 
In other words, all of the elements $c_{\Lambda}$ are in $\m_R$, and hence, $\alpha \in Z\left(K^R(\mathbf{x}, \m_R)\right)^+$. Therefore, the equality~\eqref{eq20250421a} holds.
\end{para}

The following statement is an immediate consequence of the equality~\eqref{eq20250421a}. 

\begin{prop}\label{cor20250422a}
The following equalities hold:
\begin{gather}
\ker\left(\pi_R\right)=A_R^+\label{eq20250422a}\\
\im\left(\pi_R\right)=k=\left(\bigwedge k^{e}\right)_0.\label{eq20250422b}
\end{gather}
\end{prop}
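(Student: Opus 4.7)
The plan is to derive both equalities by analyzing the graded $k$-algebra homomorphism $\pi_R$ degree by degree, with the equality of positive cycle parts~\eqref{eq20250421a} doing essentially all of the real work.

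First, I would compute the image. In degree $0$ we have $\HH_0(K^R(\mathbf{x})) = R/\m_R = k$ because $\mathbf{x}$ minimally generates $\m_R$, while $(\bigwedge k^{e})_0 = k$. The chain map $\nu$ in degree $0$ is the canonical surjection $R \onto k$, so after passing to homology the induced map $(\pi_R)_0\colon k \to k$ is the identity on $k$. In particular the degree-$0$ component of $\im(\pi_R)$ fills all of $(\bigwedge k^{e})_0$.

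Next, for positive degrees, pick any homology class represented by $\alpha \in Z(K^R(\mathbf{x}))^+$. By~\eqref{eq20250421a} we have $\alpha \in Z(K^R(\mathbf{x},\m_R))^+$, so when $\alpha$ is written in the standard wedge basis all of its coefficients lie in $\m_R$. Since $\nu$ acts on coefficients by reduction modulo $\m_R$, it follows that $\nu(\alpha) = 0$, and therefore $\pi_R([\alpha]) = 0$. This shows $\pi_R$ vanishes on $A_R^+$, which together with the degree-$0$ computation yields~\eqref{eq20250422b}.

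Finally, the kernel equality~\eqref{eq20250422a} falls out of the same analysis: the previous paragraph shows $A_R^+ \subseteq \ker(\pi_R)$, and conversely any element of $\ker(\pi_R) \cap (A_R)_0$ is killed by the identity map on $k$, hence is zero. Combining these gives $\ker(\pi_R) = A_R^+$. I do not anticipate any serious obstacle in carrying this out; the proposition is essentially a transcription of~\eqref{eq20250421a} into the language of homology, with the only point requiring care being the identification of $(\pi_R)_0$ as the identity on $k$.
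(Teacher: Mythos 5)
Your argument is correct and fills in exactly what the paper leaves implicit: the paper states the proposition as an "immediate consequence" of~\eqref{eq20250421a} without giving a proof, and your degree-by-degree unpacking—$(\pi_R)_0$ is the canonical identification $R/\m_R \cong k$, while in positive degrees~\eqref{eq20250421a} forces every cycle to have coefficients in $\m_R$ and hence die under reduction mod $\m_R$—is precisely the intended reasoning. No gaps.
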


\begin{para}\label{para20240724b}
Throughout this note, we understand that $k$-algebras do not necessarily have unity. For instance, unlike $A_R$ which is a $k$-algebra with unity, both $\HH(K^R(\mathbf{x}, \m_R))$ (from the triangle~\eqref{tri}) and $A_R^+$ are $k$-algebras without unity.
\end{para}


The following lemma plays a crucial role in the proof of Main Theorem.

\begin{lem}\label{thm20240726a}
There is an isomorphism  
$$
\HH(K^R(\mathbf{x}, \m_R)) \cong A_R ^+  \ltimes \shift^{-1}\left(\bigwedge k^{e}\right) ^+
$$
of $k$-algebras (without unity). More precisely, there is an isomorphism
\begin{equation}\label{eq20240726a}
\HH(K^R(\mathbf{x}, \m_R)) \cong A_R ^+  \oplus \shift^{-1}\left(\bigwedge k^{e}\right) ^+ 
\end{equation}
of $k$-vector spaces which is also an isomorphism of $k$-algebras if the multiplication on the right-hand side of~\eqref{eq20240726a} is defined by the formula 
\begin{equation}\label{eq20250422c}
(u, y)\cdot (u', y') = (uu', 0) 
\end{equation}
for  all $(u, y),  (u', y') \in A_R ^+  \oplus \shift^{-1}\left(\bigwedge k^{e}\right) ^+$.
\end{lem}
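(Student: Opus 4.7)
The plan is to combine the long exact sequence of the triangle~\eqref{tri} with a careful multiplicative analysis of $F':=K^R(\mathbf{x},\m_R)$, which is a two-sided DG ideal in $F:=K^R(\mathbf{x})$. Let $\iota_*\colon \HH(F')\to A_R$ denote the map induced on homology by the inclusion $F'\hookrightarrow F$; this is the first arrow of~\eqref{tri}.

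\emph{Vector space isomorphism.} Since $\bigwedge k^e$ has zero differential and, by Proposition~\ref{cor20250422a}, $\pi_R$ has kernel $A_R^+$ and image $k=(\bigwedge k^e)_0$, the long exact sequence associated with~\eqref{tri} breaks into short exact sequences
\[0\longrightarrow (\bigwedge k^e)_{n+1}\xra{\psi_{n+1}}\HH_n(F')\xra{\iota_*}(A_R^+)_n\longrightarrow 0\]
for every $n\geq 0$ (with the convention $(A_R^+)_0=0$). Splitting these over $k$ and reindexing by $\shift^{-1}$ yields the vector space decomposition in~\eqref{eq20240726a}, with $\im(\psi)$ identified with $\shift^{-1}(\bigwedge k^e)^+$.

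\emph{Annihilator property.} Next I would show that $\im(\psi)$ is contained in the two-sided annihilator of $\HH(F')$. For $\psi(c)=[\partial\tilde c]_{F'}$ with $\tilde c\in F$ lifting $c\in F/F'$, and any cycle $z\in Z(F')$, Leibniz together with $\partial z=0$ gives $\partial\tilde c\cdot z=\partial(\tilde c\cdot z)$; since $\tilde c\cdot z\in F\cdot F'\subseteq F'$, this is a boundary in $F'$, so $\psi(c)\cdot [z]_{F'}=0$. The right-handed version is identical. This is exactly the trivial-bimodule property needed to recover the multiplication formula~\eqref{eq20250422c}, once a $k$-algebra complement to $\im(\psi)$ is in place.

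\emph{Splitting the algebra extension --- the main obstacle.} The crucial step is constructing a subalgebra $U\subseteq \HH(F')$ with $U\oplus\im(\psi)=\HH(F')$ and $\iota_*|_U$ an isomorphism onto $A_R^+$. My key claim is that $\HH(F')^2\cap\im(\psi)=0$, so that $\iota_*$ restricts to an isomorphism $\HH(F')^2\xra{\sim}(A_R^+)^2$. To prove the claim, choose cycle representatives $z_\alpha,z_\beta\in Z(F')$ of classes with $\iota_*(\alpha\beta)=0$; then $z_\alpha z_\beta\in F'\cdot F'\subseteq \m_R^2F$, and by hypothesis $z_\alpha z_\beta=\partial w$ for some $w=\sum_{|\Lambda|=n+1}a_\Lambda e_\Lambda\in F_{n+1}$ with $a_\Lambda\in R$. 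The coefficient of $e_{\Lambda'}$ in $\partial w$ equals $\sum_{i\notin\Lambda'}\pm a_{\Lambda'\cup\{i\}}\,x_i$ and must lie in $\m_R^2$; reducing modulo $\m_R^2$ and using that $\{[x_i]\}_{i=1}^{e}$ is a $k$-basis of $\m_R/\m_R^2$ (by minimality of $\mathbf{x}$), each residue $\overline{a_{\Lambda'\cup\{i\}}}\in k$ must vanish, forcing every $a_\Lambda\in\m_R$ and $w\in F'$. Hence $z_\alpha z_\beta\in B(F')$ and $\alpha\beta=0$, proving the claim. With this in hand, define $\sigma\colon A_R^+\to\HH(F')$ to equal $(\iota_*|_{\HH(F')^2})^{-1}$ on $(A_R^+)^2$ and extend $k$-linearly by any section on a chosen complement. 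For $a,b\in A_R^+$, the product $\sigma(a)\sigma(b)$ lies in $\HH(F')^2$ and maps via $\iota_*$ to $ab$, so $\sigma(a)\sigma(b)=(\iota_*|_{\HH(F')^2})^{-1}(ab)=\sigma(ab)$, making $\sigma$ an algebra homomorphism. Setting $U:=\sigma(A_R^+)$ and combining with the annihilator step yields $\HH(F')=U\oplus\im(\psi)$ with multiplication satisfying the desired formula $(u,y)(u',y')=(uu',0)$.
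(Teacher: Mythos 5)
Your proof is correct, and it follows the paper's overall strategy for the first two steps: the vector space decomposition comes from the long exact sequence associated with triangle~\eqref{tri} together with Proposition~\ref{cor20250422a}, and the annihilator property $\im(\psi)\cdot \HH(K^R(\mathbf{x},\m_R))=0$ is obtained exactly as in the paper, via Leibniz and the DG-ideal property $K^R(\mathbf{x})\cdot K^R(\mathbf{x},\m_R)\subseteq K^R(\mathbf{x},\m_R)$ (the paper's ``$yy'=0$'' and ``$yz=0$'' computations).

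Where you genuinely go beyond the paper's written argument is your third step. The paper shows that $\theta$ is a ring homomorphism with kernel $\im(\psi)$ and that $\im(\psi)$ annihilates $\HH(K^R(\mathbf{x},\m_R))$; this gives the formula $(u,y)(u',y')=(uu',\ast)$ for \emph{some} $\ast\in\im(\psi)$, but it does not by itself explain why a $k$-linear splitting can be chosen so that $\ast=0$ for all $u,u'$, i.e., why the square-zero extension $0\to\im(\psi)\to\HH(K^R(\mathbf{x},\m_R))\xra{\theta}A_R^+\to 0$ splits as an extension of $k$-algebras. Your key lemma, $\HH(K^R(\mathbf{x},\m_R))^2\cap\im(\psi)=0$, supplies exactly this: it forces $\theta$ to restrict to an isomorphism on the subalgebra $\HH(K^R(\mathbf{x},\m_R))^2$, from which a multiplicative section of $\theta$ is built. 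Your proof of that lemma, via comparing Koszul coefficients modulo $\m_R^2$ and invoking minimality of $\mathbf{x}$, is a close relative of the paper's own argument for the equality $Z(K^R(\mathbf{x},\m_R))^+=Z(K^R(\mathbf{x}))^+$ in~\ref{para20240724a}, and it is correct (the only degree worth checking separately, degree $0$, is automatic since $(\HH(K^R(\mathbf{x},\m_R))^2)_0=0$ already). In short: same framework, but you make explicit a splitting argument that the published proof leaves implicit, and that addition is what actually delivers the trivial-extension structure asserted in the lemma.
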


\begin{proof}
It follows from the triangle~\eqref{tri} that  $\HH(K^R(\mathbf{x}, \m_R)) \cong  \shift \Cone(\pi_R)$ as a graded $k$-module, where $\Cone(\pi_R)$ is the mapping cone of $\pi_R$. 
On the other hand, by Proposition~\ref{cor20250422a} we get an isomorphism $\shift \Cone(\pi_R) \cong A_R ^+  \oplus \shift^{-1}\left(\bigwedge k^{e}\right) ^+$, which implies the isomorphism~\eqref{eq20240726a}.
In this isomorphism, the induced mapping $$\theta\colon \HH(K^R(\mathbf{x}, \m_R)) \to A_R ^+$$ is a natural one which preserves multiplication and hence, is a ring homomorphism. Also, $\shift^{-1}\left(\bigwedge k^{e}\right) ^+=\ker(\theta)$ is identical to the image of the composition
$$
\xymatrix{
\shift^{-1}\left(\bigwedge k^{e}\right) ^+ \ar[r]^-{\rho}&  \shift^{-1}\bigwedge k^{e} \ar[r]^-{\psi}& \HH(K^R(\mathbf{x}, \m_R))
}
$$
where $\rho$ is the natural inclusion map.

To verify~\eqref{eq20250422c}, let $y \in \shift^{-1}\left(\bigwedge k^{e}\right) ^+$ and find its pre-image  $\eta \in \shift^{-1}K^R(\mathbf{x})^+$ that is mapped to $y$ by applying the functor $- \otimes _R k$. 
Then, $\psi (y)$  is defined to be the class of $-d^{K^R(\mathbf{x})}(\eta)$  in $\HH(K^R(\mathbf{x}, \m_R))$. Note that $d^{K^R(\mathbf{x})}(\eta)\in Z(K^R(\mathbf{x}))$.

Let $y'\in \shift^{-1}\left(\bigwedge k^{e}\right) ^+$ and consider an element $\eta'\in \shift^{-1}K^R(\mathbf{x})^+$ that is obtained for $y'$ similar to $\eta$ for $y$ in the previous paragraph. 
Then, the multiplication  $yy'$ in $\HH(K^R(\mathbf{x}, \m_R))$ is, by definition, the product 
$d^{K^R(\mathbf{x})}(\eta)d^{K^R(\mathbf{x})}(\eta')$ in $\HH(K^R(\mathbf{x}, \m_R))$. Note that $\eta d^{K^R(\mathbf{x})}(\eta')\in K^R(\mathbf{x}, \m_R)$ and $d^{K^R(\mathbf{x})}(\eta)d^{K^R(\mathbf{x})}(\eta')$ equals to the boundary $d^{K^R(\mathbf{x})}\left(\eta d^{K^R(\mathbf{x})}(\eta')\right)$  in $K^R(\mathbf{x}, \m_R)$. 
Hence, $yy' =0$  in  $\HH(K^R(\mathbf{x}, \m_R))$. 

Next, let $\zeta  \in Z(K^R(\mathbf{x}))^+$, and denote the class of $\zeta$ in $\HH(K^R(\mathbf{x}))$ by $z$. 
Then, the multiplication  $yz$  in  $\HH(K^R(\mathbf{x}, \m_R))$ is defined to be the class of 
$d^{K^R(\mathbf{x})}(\eta) \zeta$ which is equal to  $d^{K^R(\mathbf{x})}(\eta \zeta)$ because $d^{K^R(\mathbf{x})}(\zeta)=0$. 
Again, by the equality~\eqref{eq20250421a} we know that $\zeta  \in Z(K^R(\mathbf{x}, \m_R))^+ \subseteq  K^R(\mathbf{x}, \m_R)$. Hence, $\eta \zeta \in K^R(\mathbf{x}, \m_R)$ and therefore, $d^{K^R(\mathbf{x})}(\eta) \zeta$ is homotopic to zero in $K^R(\mathbf{x}, \m_R)$. This means that $yz = 0$  in $\HH(K^R(\mathbf{x}, \m_R))$. This completes the verification of the equality~\eqref{eq20250422c}.
\end{proof}

\begin{para}\label{para20250510c}
The \emph{fiber product} of commutative noetherian local rings $(S,\frak s,k)$ and $(T,\frak t,k)$ over their common residue field $k$ is defined to be
$$
S\times_k T=\left\{(s,t)\in S\times T\mid \pi_S(s)=\pi_T(t)\right\}
$$
where $S\xra{\pi_S} k\xla{\pi_T}T$ are the natural surjections. In this situation, $S\times_k T$ is a local ring with maximal ideal $\frak s\oplus \frak t$ and residue field $k$.
Note that the class of fiber product rings of the form $S\times_kT$ coincides with the class of local rings with decomposable maximal ideal. More precisely, if $\fm_R=I\oplus J$ is a non-trivial direct sum decomposition of $\fm_R$ with $I,J$ being ideals of $R$, then $R\cong R/I\times_kR/J$; see either~\cite[Lemma 3.1]{ogoma:edc} or~\cite[Fact 3.1]{nasseh:lrqdmi}. Moreover, for non-negative integers $a,b$ and variables $\mathbf{v}=v_1,\ldots,v_n$ and $\mathbf{w}=w_1,\ldots,w_m$, there is a ring isomorphism
\begin{equation}\label{eq20250511a}
\!\!\!\!\!\frac{k[\![\mathbf{v}]\!]}{(f_1,\ldots,f_a)}\times_k \frac{k[\![\mathbf{w}]\!]}{(g_1,\ldots,g_b)}\cong\frac{k[\![\mathbf{v},\mathbf{w}]\!]}{\left(f_1,\ldots,f_a,g_1,\ldots,g_b,v_iw_j \mid \begin{matrix}\!\!1\leq i\leq n\\ 1\leq j\leq m\end{matrix}\right)}.
\end{equation}
Other references on fiber products (i.e., local rings with decomposable maximal ideal) and their properties include, but not limited to,~\cite{nasseh:vetfp, NST, nasseh:ahplrdmi}.
\end{para}

We can now provide the proof of Main Theorem. Evidently, in addition to the terminology from~\ref{para20250420b}-\ref{para20250420c}, the setting of this theorem is now fully in effect.\vspace{2mm}

\noindent \emph{Proof of Main Theorem.} In the setting of Main Theorem, as we discussed in~\ref{para20250510c}, note that $R\cong R/I\times_k R/J$ is a fiber product ring over the residue field $k$ and we also have $\fm_{R/I}\cong J$ and $\fm_{R/J}\cong I$. Since $\mathbf{x}_2 I=(0)=\mathbf{x}_1 J$ in $R$, we have the isomorphisms 
\begin{align*}
K^R(\mathbf{x}, I)=K^R(\mathbf{x}_1\cup \mathbf{x}_2, I)\cong K^R(\mathbf{x}_1, I)\otimes_R \bigwedge R^{s}\\
K^R(\mathbf{x}, J)=K^R(\mathbf{x}_1\cup \mathbf{x}_2, J)\cong \bigwedge R^{t} \otimes_R K^R(\mathbf{x}_2, J)
\end{align*}
of $R$-algebras. Therefore, we get the isomorphisms
\begin{align*}
\HH(K^R(\mathbf{x}, \m_R)) &\cong \HH\left(K^R(\mathbf{x}, I)\right)\oplus \HH\left(K^R(\mathbf{x}, J)\right)\\
&\cong \left(\HH(K^R(\mathbf{x}_1, I)) \otimes _R \bigwedge R^{s} \right) \oplus 
\left(\bigwedge R^{t}\otimes_R \HH(K^R(\mathbf{x}_2, J))\right)\\
&\cong \left(\HH(K^R(\mathbf{x}_1, I)) \otimes _k \bigwedge k ^{s} \right) \oplus 
\left(\bigwedge k ^{t}\otimes_k \HH(K^R(\mathbf{x}_2, J))\right)\\
&\cong
\left(\HH(K^{R/J}(\mathbf{x}_1, I)) \otimes _k \bigwedge k ^{s} \right) \oplus 
\left(\bigwedge k ^{t}\otimes_k \HH(K^{R/I}(\mathbf{x}_2, J))\right)
\end{align*}
in which the third isomorphism follows from the fact that both $\HH(K^R(\mathbf{x}_1, I))$ and $\HH(K^R(\mathbf{x}_2, J))$ are $k$-algebras. In fact, these are all isomorphisms of $k$-algebras.
Now, applying Lemma~\ref{thm20240726a} to the rings $R$, $R/I$, and $R/J$ we obtain the isomorphisms
\begin{align}
A_R ^+  \ltimes \shift^{-1}\left(\bigwedge k^{e}\right) ^+ &\cong
\left(\left(A^+_{R/J} \ltimes  \shift^{-1}\left(\bigwedge k^{t}\right)^+\right) \otimes_k  \bigwedge k ^{s} \right)\notag\\
&\oplus\left( \bigwedge k ^{t} \otimes_k \left(A^+_{R/I} \ltimes \shift^{-1} \left(\bigwedge k^{s}\right)^+\right)  \right)\notag  \\
&\cong \left( \left(A ^+_{R/J} \otimes _k \bigwedge k^{s}\right) \times \left(\bigwedge k ^{t}\otimes_k A ^+_{R/I}\right)  \right)\ltimes \shift^{-1} \Gamma\label{eq20250620a}
\end{align}
of $k$-algebras, where
$$
\Gamma=\left(\left(\bigwedge k^{t}\right)^+ \otimes_k  \bigwedge k ^{s}\right) \oplus 
\left(\bigwedge k ^{t} \otimes_k \left(\bigwedge k^{s}\right)^+\right).
$$
On the other hand, there is a short exact sequence\footnote{It is elementary to see that for an $R$-module $M$ and its $R$-submodules $M_1$ and $M_2$, there is a short exact sequence
$$
0\to \frac{M}{M_1\cap M_2}\overset{\alpha}\longrightarrow \frac{M}{M_1}\oplus \frac{M}{M_2}\overset{\beta}\longrightarrow\frac{M}{M_1+M_2}\to 0
$$
of $R$-modules in which for all elements $x,y\in M$ the maps $\alpha$ and $\beta$ are defined by the equalities
\begin{gather*}
\alpha(x)=(x+M_1,x+M_2)\\
\beta(x+M_1,y+M_2)=x-y+(M_1+M_2).
\end{gather*} 
}
$$
0\to\frac{\bigwedge k^{e}}{k\otimes_kk}\longrightarrow \frac{\bigwedge k^{e}}{k\otimes_k\bigwedge k^{t}}\oplus \frac{\bigwedge k^{e}}{\bigwedge k^{s}\otimes_kk}\longrightarrow \frac{\bigwedge k^{e}}{k\otimes_k\bigwedge k^{t}+\bigwedge k^{s}\otimes_kk}\to 0
$$
of $k$-vector spaces in which the left term is isomorphic to $\left(\bigwedge k^{e}\right) ^+$, the middle term is isomorphic to $\Gamma$, and the term on the right is isomorphic to $\shift W$.
In other words, $\left(\bigwedge k^{e}\right) ^+$ is naturally embedded as a direct summand in $\Gamma$ and the cokernel of this embedding is $\shift W$. Hence, we have the isomorphism
$\Gamma\cong \left(\bigwedge k^{e}\right) ^+\oplus \shift W$ of $k$-vector spaces which along with~\eqref{eq20250620a} implies the isomorphism~\eqref{eqa}. \qed

\begin{para}\label{para20250510a}
Consider the setting of Main Theorem. The proof of Lemma~\ref{thm20240726a} shows that $A^+_R\cdot W=0$. Moreover, in formula~\eqref{eq20250510a}, identifying $\bigwedge k^{t}\otimes_k\bigwedge k^{s}$ by $\bigwedge k^{t+s}=\bigwedge k^{e}$ and considering a basis $\{\omega_1,\ldots,\omega_t,\omega_{t+1},\ldots,\omega_{e}\}$ for the $k$-vector space $k^{e}$, we see that the $k$-vector space $\shift W$ is generated by the set
$$
\left\{\omega_{i_1}\wedge\ldots\wedge \omega_{i_v}
\left| \text{\begin{tabular}{c}
\!\!\!\!\!\!\!\!$1\leq i_1<\cdots<i_v\leq t+s$\\
$\{i_1,\ldots,i_v\}\nsubseteq \{1,\ldots,t\}$ and \\
\!\!$\{i_1,\ldots,i_v\}\nsubseteq \{t+1,\ldots,e\}$
\end{tabular}}\right.\!\!\!\right\}.
$$
Note that the vector space dimension of $W$ is $2^{t+s}-2^t-2^s+1=(2^t-1)(2^s-1)$.
\end{para}

We continue using the setting of Main Theorem in the following examples.

\begin{ex}\label{ex20250510a}
Let $R=k[\![x]\!]\times_kk[\![y]\!]\cong k[\![x,y]\!]/(xy)$; see~\eqref{eq20250511a}. In this case, $\m_R=(x)\oplus (y)$, $t=1=s$, and $R/(x)$ and $R/(y)$ are regular local rings. Hence, $A^+_{R/(x)}=0=A^+_{R/(y)}$, that is, $A_{R/(x)}=k=A_{R/(y)}$. On the other hand, by~\ref{para20250510a}, the $k$-vector space $\shift W$ is generated by a singleton $\{\omega\wedge \omega'\}$ of degree 2, where $\{\omega\}$ and $\{\omega'\}$ are bases for $k=k^t$ and $k=k^s$, respectively. Therefore, $W\cong \shift^{-1}\left(\shift^2 k\right)=\shift k$, and hence, $A_R\cong k\ltimes \shift k$; compare this formula with case $\mathbf{C}(1)$ in~\cite[1.3]{avramov:cslrec3}.
\end{ex}

\begin{ex}
Let $R=\left(k[\![x]\!]/(x^{m})\right)\times_k\left(k[\![y]\!]/(y^{n})\right)\cong k[\![x,y]\!]/(x^{m},xy,y^{n})$, for integers $m,n\geq 2$; see \eqref{eq20250511a}. In this case, $\m_R=(\overline{x})\oplus (\overline{y})$, where $\overline{x},\overline{y}$ denote $x,y$ modulo $(x^{m})$ and $(y^{n})$. Also, $t=1=s$ and we have $R/(\overline{y})\cong k[\![x]\!]/(x^{m})$ and $R/(\overline{x})\cong k[\![y]\!]/(y^{n})$. Hence, $A^+_{R/(\overline{x})}=k\omega=A^+_{R/(\overline{y})}$, where $\omega$ is a generator of degree $1$; more precisely, by~\cite[1.3]{avramov:cslrec3} we have $A_{R/(\overline{x})}=\bigwedge k\omega=A_{R/(\overline{y})}$. Similar to Example~\ref{ex20250510a}, again as a $k$-vector space, $\shift W$ is generated by a singleton of degree 2 and thus, by Main Theorem we have
$$
A^+_R\cong \left(\left(\bigwedge k^1\otimes_kk\omega\right)\times \left(k\omega\otimes_k\bigwedge k^1\right)\right)\ltimes \shift k.
$$
Note that, as a $k$-vector space, $A_R\cong k\oplus \shift k^3\oplus\shift^2 k^2$. Note also that, using the ring structure of $A_R^+$ one can check that $\left(A_R^+\right)^2=0$. Therefore, we have an isomorphism $A_R\cong k\ltimes A_R^+$ of $k$-algebras; compare this with case $\mathbf{S}$ in~\cite[1.3]{avramov:cslrec3}. 
\end{ex}


Our Main Theorem can be generalized to the following statement by induction.

\begin{thm}\label{rproducts}
Assume that there are non-zero ideals $I_i$ with $i=1,\ldots,r$ for an integer $r\geq 2$ such that $\fm_R=\bigoplus_{1\leq i\leq r}I_i$, where each $I_i$ is minimally generated by $n_i$ elements. For each $1\leq i\leq r$ we set $J_i=I_1\oplus\cdots\oplus I_{i-1}\oplus I_{i+1}\oplus\cdots\oplus I_r$. Then, there is an isomorphism
$$A _R ^+ \cong \mathcal{A} \ltimes W$$ of $k$-algebras (without unity) in which
\begin{gather*}
\mathcal{A} = \prod _{i=1} ^r \left(\bigwedge k^{n_1}\otimes_k \cdots \otimes_k \overbrace{A _{R/J_i} ^+}^{\substack{i-\text{th place}}} \otimes_k \cdots  \otimes_k \bigwedge k^{n_r} \right)
\\
W=\shift^{-1}\left(\frac{\bigwedge k^{n_1}\otimes_k \ldots \otimes_k \bigwedge k^{n_r}}{\sum _{i=1}^r (\bigwedge k^{n_1}\otimes_k \cdots \otimes_k \underbrace{k}_{i-\text{th place}} \otimes_k \cdots \otimes_k \bigwedge k^{n_r})}\right)
\end{gather*}
and the $\mathcal{A}$-module structure on  $W$ is given by the equality $\mathcal{A}\cdot W=0$.
\end{thm}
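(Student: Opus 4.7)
The plan is to proceed by induction on $r \geq 2$, with the Main Theorem providing the base case $r=2$. For the inductive step with $r \geq 3$, I would first regroup the decomposition as $\fm_R = I_1 \oplus J_1$, where $J_1 = I_2 \oplus \cdots \oplus I_r$ has minimal generating length $s := n_2 + \cdots + n_r$, and apply the Main Theorem to this two-fold decomposition. This yields
$$
A_R^+ \cong \bigl((\bigwedge k^{n_1} \otimes_k A^+_{R/I_1}) \times (A^+_{R/J_1} \otimes_k \bigwedge k^s)\bigr) \ltimes W',
$$
where $W'$ denotes the Main Theorem's $W$ for the parameters $t=n_1$ and $s$.

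Next, since $\fm_{R/I_1} \cong I_2 \oplus \cdots \oplus I_r$ is an $(r-1)$-fold direct sum decomposition and, writing $\widetilde{J}_i$ for the analogous omitted-summand ideal in $R/I_1$, we have $(R/I_1)/\widetilde{J}_i \cong R/J_i$ for $i = 2, \ldots, r$, the inductive hypothesis applies to $R/I_1$ and gives $A^+_{R/I_1} \cong \widetilde{\mathcal{A}} \ltimes \widetilde{W}$ with $\widetilde{\mathcal{A}} \cdot \widetilde{W} = 0$. Substituting this and using that $\bigwedge k^{n_1} \otimes_k (-)$ commutes with trivial extensions, distributes over direct products of $k$-algebras, and respects the identification $\bigwedge k^s \cong \bigwedge k^{n_2} \otimes_k \cdots \otimes_k \bigwedge k^{n_r}$, the first factor of the outer product reassembles into $\prod_{i=2}^r (\bigwedge k^{n_1} \otimes \cdots \otimes A^+_{R/J_i} \otimes \cdots \otimes \bigwedge k^{n_r})$.

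Combining this with the $i=1$ factor $A^+_{R/J_1} \otimes \bigwedge k^s$, I would apply the rearrangement identities $(X \ltimes Y) \times Z \cong (X \times Z) \ltimes Y$ (with $Z$ acting trivially on $Y$) and $(\mathcal{B} \ltimes U) \ltimes V \cong \mathcal{B} \ltimes (U \oplus V)$ (when $U \oplus V$ is square-zero and annihilated by $\mathcal{B}$). Both hypotheses hold here thanks to $\widetilde{\mathcal{A}} \cdot \widetilde{W} = 0$, the analogous triviality from the Main Theorem, and the fact that the $i=1$ factor acts trivially on the inductive $\widetilde{W}$. This presents $A_R^+$ in the form $\mathcal{A} \ltimes \bigl((\bigwedge k^{n_1} \otimes_k \widetilde{W}) \oplus W'\bigr)$, with $\mathcal{A}$ already in the claimed $r$-fold product form.

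The main obstacle will be identifying the module $(\bigwedge k^{n_1} \otimes \widetilde{W}) \oplus W'$ with the $W$ described in the theorem. I plan to do this by generalising the Mayer--Vietoris-style short exact sequence from the proof of the Main Theorem to an $r$-fold version: the sequence
$$
0 \to V\big/\bigcap_{i=1}^r V_i \longrightarrow \bigoplus_{i=1}^r V/V_i \longrightarrow V\big/\sum_{i=1}^r V_i \to 0,
$$
where $V = \bigwedge k^{n_1} \otimes \cdots \otimes \bigwedge k^{n_r}$ and $V_i$ is the subspace with $k$ in the $i$-th slot, can be combined with the inductive description of $\widetilde{W}$ as an analogous $(r-1)$-fold quotient together with the two-fold description of $W'$ to recover the stated $W$. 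Verifying this combinatorial identification---essentially an inclusion-exclusion over the subsets of $\{1,\ldots,r\}$---is the key technical step, and once it is established the equality $\mathcal{A} \cdot W = 0$ is automatic from the corresponding annihilation facts at each stage of the induction.
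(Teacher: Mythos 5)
Your approach --- induction on $r$ by regrouping $\fm_R = I_1 \oplus J_1$, applying the Main Theorem, feeding the inductive hypothesis for $R/I_1$ into the first factor, and then rearranging trivial extensions --- is exactly the route the paper gestures at with ``by induction,'' and the algebraic rearrangement steps are sound. You correctly arrive at
$$
A_R^+ \;\cong\; \mathcal{A} \ltimes \Bigl( \bigl(\textstyle\bigwedge k^{n_1} \otimes_k \widetilde{W}\bigr) \oplus W' \Bigr),
$$
with $\mathcal{A}$ already in the stated $r$-fold product form.

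The gap is at the step you flag as the ``key technical step,'' and it is not a step that can be filled. The proposed $r$-fold Mayer--Vietoris sequence
$$
0 \to V\big/\!\bigcap_{i} V_i \longrightarrow \bigoplus_{i} V/V_i \longrightarrow V\big/\!\sum_{i} V_i \to 0
$$
is exact only for $r=2$; for $r\geq 3$ the alternating sum of dimensions does not vanish (one needs the full \v{C}ech-type complex, whose higher terms do not drop out here). More concretely, the two $k$-vector spaces you are trying to identify have different dimensions. The space $\shift W$ in the statement has basis the monomials $\omega_{S_1}\otimes\cdots\otimes\omega_{S_r}$ with \emph{every} $S_i$ nonempty, hence dimension $\prod_{i=1}^r (2^{n_i}-1)$, whereas your inductively derived module $\bigl(\bigwedge k^{n_1}\otimes\widetilde{W}\bigr)\oplus W'$ has dimension
$$
2^{n_1}\!\!\prod_{i=2}^{r}(2^{n_i}-1) \;+\; (2^{n_1}-1)\bigl(2^{\,n_2+\cdots+n_r}-1\bigr),
$$
and these disagree as soon as $r\geq 3$.

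A concrete check shows your inductive module, not the stated $W$, is the correct one: take $r=3$ and $n_1=n_2=n_3=1$, so $R = k[\![x,y,z]\!]/(xy,xz,yz)$. Here $\pd_Q \widehat R = 2$ with Betti numbers $1,3,2$, so $\dim_k A_R^+ = 5$; meanwhile $\mathcal{A}=0$ since each $R/J_i$ is regular, so the theorem requires $\dim_k W = 5$. Your derived module gives $2\cdot 1 + 1\cdot 3 = 5$, but the formula in the statement gives $\prod(2^{1}-1)=1$. So do not try to force the identification --- it fails --- and instead report the discrepancy: the inductive argument proves $A_R^+ \cong \mathcal{A}\ltimes W$ with $W = \bigl(\bigwedge k^{n_1}\otimes_k \widetilde{W}\bigr)\oplus W'$ (equivalently, iterating, a direct sum over the intermediate two-fold stages), and the closed-form single-quotient description of $W$ given in the theorem statement needs to be corrected for $r\geq 3$.
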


Using Theorem~\ref{rproducts}, we can extend Example~\ref{ex20250510a} as follows.

\begin{ex}\label{ex20250509a}
Let $r\geq 2$ be an integer and $R=T_1\times_k \cdots\times_k T_r$, where for each $1\leq i\leq r$ we have $T_i=k[\![\chi_i]\!]$ with $\chi_i=\{X_1^{(i)},\ldots,X_{n_i}^{(i)}\}$ for an integer $n_i\geq 1$. In this example, by~\eqref{eq20250511a}, we have the isomorphism $R\cong k[\![\chi_1,\ldots,\chi_r]\!]/\frak I$, where $\frak I$ is the ideal of the ring $k[\![\chi_1,\ldots,\chi_r]\!]$ generated by the set $\left\{\chi_i\chi_j\mid 1\leq i,j\leq r\right\}$. Note that, $\m_R= (\chi_1)\oplus\cdots\oplus(\chi_r)$ and if $J_i=(\chi_1)\oplus\cdots\oplus(\chi_{i-1})\oplus (\chi_{i+1})\oplus \cdots\oplus(\chi_r)$ for each $1\leq i\leq r$, then
$$
R/J_i\cong T_i
$$
which is a regular local ring. Thus, for all $1\leq i\leq r$ we have $A^+_{R/J_i}=0$, which implies that $A_R\cong (k\oplus\mathcal{A})\ltimes W \cong k\ltimes W$. To compute $W$, for every integer $2\leq p\leq r$, let $\ell_p$ denote the number of degree-$p$ basis elements in the $k$-vector space $\shift W$, i.e., $W_{p-1}=\shift^{p-1} k^{\ell_p}$. According to our discussion in~\ref{para20250510a}, we have
$$
\ell_p=\sum_{j_1+\cdots+j_r=p} {n_1\choose j_1}\cdots{n_r\choose j_r}
$$
where each $j_i$ is a positive integer. This completely determines the structure of $A_R$.
\end{ex}

The following example demonstrates Example~\ref{ex20250509a} for $r=n_1=n_2=2$.

\begin{ex}
Let $R=k[\![x,y]\!]\times_kk[\![z,w]\!]\cong k[\![x,y,z,w]\!]/(xz,xw,yz,yw)$. In this case, $\m_R=(x,y)\oplus (z,w)$, $n_1=2=n_2$, and we have $R/(x,y)\cong k[\![z,w]\!]$ and $R/(z,w)\cong k[\![x,y]\!]$ are regular local rings. Hence, $A^+_{R/(x,y)}=0=A^+_{R/(z,w)}$, i.e., $A_R\cong k\ltimes W$. By~\ref{para20250510a}, the $k$-vector space $\shift W$ is generated by the set\vspace{1mm}
$$
\!\!\!\left\{\!\omega_{i_1}\wedge\ldots\wedge \omega_{i_v}\!
\left| \text{\begin{tabular}{c}
\!\!$1\leq i_1<\cdots<i_v\leq 4$\\
\!\!\!\!\!\!$\{i_1,\ldots,i_v\}\nsubseteq \{1,2\}$ \\
\!\!\!\!\!\!$\{i_1,\ldots,i_v\}\nsubseteq \{3,4\}$
\end{tabular}}\right.\!\!\!\!\right\}\!=\!\left\{
\text{\begin{tabular}{c}
\!\!\!\!\!\!$\omega_1\wedge \omega_3, \omega_1\wedge \omega_4, \omega_2\wedge \omega_3, \omega_2\wedge \omega_4,$\\
\!\!\!\!$\omega_1\wedge \omega_2\wedge \omega_3, \omega_1\wedge \omega_2\wedge \omega_4, \omega_1\wedge \omega_3\wedge \omega_4,$\\
\!\!\!\!\!\!$\omega_2\wedge \omega_3\wedge \omega_4,\omega_1\wedge \omega_2\wedge \omega_3\wedge \omega_4$
\end{tabular}}
\!\!\!\!\right\}\vspace{1mm}
$$
and therefore, $\shift W\cong \shift^2 k^4 \oplus \shift^3 k^4\oplus \shift^4 k$. Hence, $A_R\cong k\ltimes \left(\shift k^4 \oplus \shift^2 k^4\oplus \shift^3 k\right)$; again, compare with case $\mathbf{H}(0,0)$ in~\cite[1.3]{avramov:cslrec3}. Considering the notation from Example~\ref{ex20250509a}, note that $\ell_2=4=\ell_3$ and $\ell_4=1$.
\end{ex}

\begin{para}\label{para20250502a}
Tate~\cite{Tate} proved that there is a DG algebra resolution $T(R)\xra{\simeq} \widehat{R}$ over $Q$ in which $T(R)=Q\langle X_i\mid 1\leq i\leq n\rangle$, with $n\leq \infty$, is a free DG algebra extension of $Q$ obtained by adjoining finitely or infinitely countably many variables $X_i$ to $Q$; for the notation, see~\cite{NOY1}. It follows from~\eqref{eq20240713a} and~\eqref{eq20250502a} that
\begin{equation}\label{eq20250502b}
A_R\cong \HH(k\otimes_Q T(R)).
\end{equation}
In the setting of Main Theorem, we know that $A_R$, i.e., the left-hand side of~\eqref{eq20250502b}, can be completely determined if $A_{R/I}$ and $A_{R/J}$ are given. However, we do not know how, or whether, the Tate resolutions $T(R)$, $T(R/I)$, and $T(R/J)$ are related as well. The only result in this direction, that we are aware of and partially discusses this under restrictions, is obtained by Geller~\cite[Theorem 3.4]{Geller}. Thus, we conclude this paper with the following general question. 
\end{para}

\begin{question}
Under the settings of Main Theorem and~\ref{para20250502a}, can one describe the Tate resolution $T(R)$ in terms of $T(R/I)$ and $T(R/J)$? 
\end{question}



\section*{Acknowledgments}
We are grateful to Srikanth Iyengar, Keri Ann Sather-Wagstaff, and Keller VandeBogert for their comments on an earlier version of this paper.

\providecommand{\bysame}{\leavevmode\hbox to3em{\hrulefill}\thinspace}
\providecommand{\MR}{\relax\ifhmode\unskip\space\fi MR }
\providecommand{\MRhref}[2]{%
  \href{http://www.ams.org/mathscinet-getitem?mr=#1}{#2}
}
\providecommand{\href}[2]{#2}

\end{document}